\newtheorem{theorem}{Theorem}[section]
\newtheorem{corollary}[theorem]{Corollary}
\newtheorem{proposition}[theorem]{Proposition}
\theoremstyle{definition}
\newtheorem{question}[theorem]{Question}
\newtheorem{conjecture}[theorem]{Conjecture}
\newtheorem{example}[theorem]{Example}
\newcommand{\ab}{\mathbf{a}}
\newcommand{\bb}{\mathbf{b}}
\newcommand{\eb}{\mathbf{e}}
\newcommand{\xb}{\bm{x}}
\newcommand{\mm}{\mathfrak{m}}
\newcommand{\NN}{\mathbb{N}}
\newcommand{\ZZ}{\mathbb{Z}}
\newcommand{\kk}{\Bbbk}
\newcommand{\tC}{\widetilde{C}}
\newcommand{\Fb}[1][\bullet]{\mathbb{F}_{#1}}
\DeclareMathOperator{\lin}{lin}
\newcommand{\Fblin}[1][\bullet]{\lin(\mathbb{F}_{#1})}
\newcommand{\set}[1]{\{#1\}}
\newcommand{\sset}[1]{{#1}} %
\newcommand{\with}{\,\colon\,}
\DeclareMathOperator{\Tor}{Tor}
\DeclareMathOperator{\Span}{span}
\newcommand{\restr}[2]{{#2}|_{#1}}
\newcommand{\too}{\longrightarrow}
\newcommand{\oot}{\longleftarrow}
\newcommand{\<}{\langle}
\renewcommand{\>}{\rangle}
\newcommand{\del}{\mathrm{d}}
\newcommand{\plin}{d^{\mathrm{lin}}}
\newcommand{\coho}[2]{\widetilde{H}^{#1}(#2)} 
\newcommand{\ho}[2]{\widetilde{H}_{#1}(#2)} 
\begin{document}

\title[Linear maps in minimal free resolutions]{Linear maps in minimal free resolutions of Stanley-Reisner rings}
\author{Lukas Katth\"an}
\address{Goethe-Universit\"at, FB 12 -- Institut f\"ur Mathematik, Postfach 11 19 32, D--60054 Frankfurt am Main, Germany}

\email{katthaen@math.uni-frankfurt.de}

\keywords{Monomial ideal; Stanley-Reisner ring; Linear Part}
\subjclass[2010]{Primary: 05E40; Secondary: 13D02,13F55.}

\begin{abstract}
	In this short note we give an elementary description of the linear part of the minimal free resolution of a Stanley-Reisner ring of a simplicial complex $\Delta$.
	Indeed, the differentials in the linear part are simply a compilation of restriction maps in the simplicial cohomology of induced subcomplexes of $\Delta$.
	
	Along the way, we also show that if a monomial ideal has at least one generator of degree $2$, then the linear strand of its minimal free resolution can be written using only $\pm 1$ coefficients.
\end{abstract}

\maketitle

\section{Introduction}
Let $\kk$ be a field and $S = \kk[x_1, \dotsc, x_n]$ be a polynomial ring over it.
Consider a finitely generated graded $S$-module $M$, and its minimal free resolution $\Fb$.
The \emph{linear part} \cite{EFS} $\Fblin$ of $\Fb$ has the same modules as $\Fb$, and its differential $\plin$ is obtained from the differential $\del$ of $\Fb$ by deleting all non-linear entries in the matrices representing $\del$ in some basis of $\Fb$.

The main result of this short note is an explicit description of $\Fblin$ in the case where $M = \kk[\Delta]$ is the Stanley-Reisner ring of a simplicial complex $\Delta$.
It is well-known that $\Fb$ is multigraded and generated as $S$-module in squarefree multidegrees.
For simplicity we identify squarefree multidegrees with subsets of $[n] := \set{1,\dotsc,n}$.
We are going to use Hochster's formula, which states that
\[ \Tor_i^S(\kk[\Delta],\kk)_U \cong \coho{\#U - i - 1}{\Delta_U; \kk}, \]
where $U \subseteq [n]$ is a squarefree multidgree and $\Delta_U := \set{F \in \Delta\with F \subseteq U}$ is the \emph{restriction} of $\Delta$.
To simplify the notation, we set $U \setminus u := U \setminus \set{u}$ and $\coho{j}{\Delta_U} := \coho{j}{\Delta_U; \kk}$ for $u \in U \subseteq[n]$ and $j \in \NN$.
By Hochster's formula, $\Fblin[i]$ is isomorphic to the direct sum of modules of the form $\coho{\#U - i - 1}{\Delta_U} \otimes_\kk S(-U)$.
The differential $\plin$ turns out to be simply a compilation of all the restriction maps $\coho{i}{\Delta_U} \to \coho{i}{\Delta_{U\setminus\sset{u}}}, \omega \mapsto \restr{U\setminus\sset{u}}{\omega}$, induced by the inclusions $\Delta_{U\setminus\sset{u}} \subset \Delta_U$.
In the following theorem, we use the notation $\alpha(u,U) = \#\set{k \in U, k < u}$, where $u \in [n]$. 

\begin{restatable}{theorem}{thmmain}\label{thm:main}
	Let $\kk[\Delta]$ be the  Stanley-Reisner ring of a simplicial complex $\Delta$ and let $\Fb$ denote its minimal free resolution. 
	The linear part $\Fblin$ of $\Fb$ is isomorphic to the complex with modules 
	\[ \Fblin[i] = \bigoplus_{U \subseteq [n]} \coho{\#U - i - 1}{\Delta_U} \otimes_\kk S(-U),\]
	and the components of the differential are given by
	\[
	\begin{aligned}
	\coho{j}{\Delta_U} &\otimes_\kk S(-U) &&\too& \coho{j}{\Delta_{U\setminus\sset{u}}} &\otimes_\kk S(-{U\setminus\sset{u}}) \\
	\omega &\otimes s &&\longmapsto& (-1)^{\alpha(u,U)} \restr{U\setminus\sset{u}}{\omega} &\otimes x_u s
	\end{aligned}
	\]
\end{restatable}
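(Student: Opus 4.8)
The plan is to split the statement into its two halves and treat them very differently: the description of the modules $\Fblin[i]$ is essentially a repackaging of Hochster's formula, while the description of the differential $\plin$ carries all of the content. For the modules, I would simply note that $\Fblin$ has by definition the same modules as the minimal free resolution $\Fb$, and that minimality gives $\Fb[i]\otimes_S\kk = \Tor_i^S(\kk[\Delta],\kk)$. Hence the multigraded Betti numbers are $\dim_\kk\coho{\#U-i-1}{\Delta_U}$, and choosing a multihomogeneous basis of $\Fb[i]$ yields the claimed decomposition. To organize the differential, I would record what its linear part can possibly be: a multihomogeneous generator $g$ of $\Fb[i]$ in multidegree $U$ satisfies $\del g\in\mm\Fb[i-1]$, and its component into a generator of multidegree $V\subseteq U$ is a scalar times $x^{U\setminus V}$; the linear part retains exactly the components with $\#(U\setminus V)=1$, i.e. $V=U\setminus u$. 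Writing $\del g = \sum_u x_u g_u + (\text{terms in }\mm^2\Fb[i-1])$, the linear differential is the map $[g]\mapsto[g_u]$ on $\Tor_i(M,\kk)_U\to\Tor_{i-1}(M,\kk)_{U\setminus u}$, where $[\,\cdot\,]$ denotes the class under $\Fb\otimes\kk=\Tor(M,\kk)$.

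The crux is to identify this map $[g]\mapsto[g_u]$ with an intrinsic operator on Koszul homology. I would recompute $\Tor(M,\kk)$ from the Koszul complex $K_\bullet = M\otimes_S\bigwedge^\bullet S^n$, whose degree-$U$ strand is, via Hochster, the reduced simplicial cochain complex of $\Delta_U$. On $K_\bullet$ the contraction $\iota_{e_u^*}$, with $e_W\mapsto(-1)^{\alpha(u,W)}e_{W\setminus u}$ for $u\in W$ and $e_W\mapsto 0$ otherwise, anticommutes with $\del^K=\sum_v x_v\iota_{e_v^*}$ and so induces multigraded maps $\Tor_i(M,\kk)_U\to\Tor_{i-1}(M,\kk)_{U\setminus u}$. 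To show these equal $[g]\mapsto[g_u]$, I would form the double complex $\Fb\otimes_S K_\bullet$, whose total homology is $\Tor(M,\kk)$ with two collapses: the $q$-direction collapse recovers $\Fb\otimes\kk$ by minimality, the $p$-direction collapse recovers $K_\bullet(M)$. Lifting $g\otimes 1$ to a total cycle by the usual zig-zag, the first correction term lies in $\Fb[i-1]\otimes K_1$ and is forced to be $\sum_u g_u\otimes e_u$, because $\del^K(g_u\otimes e_u)=x_u g_u$ cancels precisely the linear part of $\del g$; tracing the comparison isomorphism then shows the induced operator on $\Tor$ is exactly $\iota_{e_u^*}$. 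This is the concrete manifestation, in the present setting, of the principle from \cite{EFS} that the linear part of a resolution is governed by the exterior-module structure of its homology.

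It then remains to read $\iota_{e_u^*}$ through Hochster's identification. A basis element $x^{U\setminus W}\otimes e_W$ of $(K_i)_U$ corresponds to the face $U\setminus W$ of $\Delta_U$, and $\iota_{e_u^*}$ is nonzero exactly when $u\in W$, i.e. on the faces $U\setminus W$ of $\Delta_U$ that avoid $u$ — precisely the faces of $\Delta_{U\setminus u}$. Hence $\iota_{e_u^*}$ is, up to sign, the cochain-level restriction $C^\bullet(\Delta_U)\to C^\bullet(\Delta_{U\setminus u})$ dual to the inclusion $\Delta_{U\setminus u}\subseteq\Delta_U$, which on cohomology is the map $\omega\mapsto\restr{U\setminus u}{\omega}$ of the statement. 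Assembling these component identifications over all $i$ produces the desired isomorphism of complexes; the resulting differential is automatically square-zero because it is transported from $\plin$.

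The main obstacle is the middle step: rigorously pinning down that the linear part of $\del^\Fb$ is the contraction operator, i.e. verifying that only the first zig-zag term feeds the linear component and that the comparison isomorphism carries it to $\iota_{e_u^*}$ without contamination from the $\mm^2$-part of $\del g$. The remaining delicate point is the sign. The contraction produces $(-1)^{\alpha(u,W)}$, whereas the stated coefficient is $(-1)^{\alpha(u,U)}$; since $W$ and $U\setminus W$ partition $U$, one has $\alpha(u,U)=\alpha(u,W)+\alpha(u,U\setminus W)$, so the extra factor $(-1)^{\alpha(u,U\setminus W)}$ must be absorbed into the orientation convention implicit in Hochster's isomorphism. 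I would fix that convention at the outset precisely so that the two signs combine to $(-1)^{\alpha(u,U)}$, and check consistency on a single face.
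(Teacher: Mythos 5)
Your proposal is correct, and its overall strategy coincides with the paper's: both arguments reduce the theorem to the statement that the linear part of the differential is the contraction action of the exterior algebra on the Koszul homology $\Tor^S_\bullet(\kk[\Delta],\kk)$, and then transport that action through Hochster's isomorphism, where contraction by $e_u^*$ becomes (up to sign) the restriction $\coho{j}{\Delta_U}\to\coho{j}{\Delta_{U\setminus\sset{u}}}$. The difference is in how the key identification is obtained. The paper does not prove it: it forms the double complex $\kk[\Delta]\otimes_\kk\Lambda^\bullet V\otimes_\kk S$ with its two Koszul-type differentials $\partial,\partial'$ and cites \cite[Theorem 5.1]{HSV} for the fact that $\partial'$ induces the linear part on the $\partial$-homology. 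You instead propose a direct proof via the comparison double complex $\Fb\otimes_S\Lambda^\bullet S^n$ and a staircase lift of a generator $g$ with $\del g=\sum_u x_u g_u+(\mm^2\text{-terms})$; this is more self-contained, at the cost of length. The one place your sketch is genuinely thin is the step you yourself flag: knowing that the first correction term is $\sum_u g_u\otimes e_u$ modulo $\mm$ does not by itself identify the induced operator on $\Tor$, because the comparison isomorphism with Koszul homology is read off from the bottom of the staircase, not the top. The clean way to close this is to note that $1\otimes\iota_{e_u^*}$, equipped with the Koszul sign, is a chain map on the total complex; applying it to the entire staircase lift of $g$ yields a staircase connecting $g_u\bmod\mm$ at the top to $\iota_{e_u^*}$ of the associated Koszul cycle at the bottom, which is exactly the identification you need. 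Your sign discussion is also consistent with the paper: the factor $(-1)^{\alpha(F,U)}$ built into the paper's version of Hochster's isomorphism (its Proposition on the Koszul complex in squarefree degree $U$) is precisely the ``orientation convention'' you postulate, and it converts the contraction sign $(-1)^{\alpha(u,W)}$ into $(-1)^{\alpha(u,U)}$ up to a global sign depending only on the cohomological degree, which is harmless up to isomorphism of complexes.
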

This extends the result of Reiner and Welker \cite[Theorem 3.2]{RW}, which describes the maps in the linear strand of $\Fb$.
An alternative description of $\Fblin$ in terms of the Alexander dual of $\Delta$ was given by Yanagawa \cite[Theorem 4.1]{Y00}.

\begin{figure}[t]
	\begin{tikzpicture}[scale=0.5]
	\newcommand{\setcoords}{
		\coordinate (a) at (0:1);
		\coordinate (b) at (144:1);
		\coordinate (c) at (216:1);
		\coordinate (d) at (72:1);
		\coordinate (e) at (-72:1);
	}
	\newcommand{\drawblack}[1]{\foreach \p in {#1} \fill \p circle (0.1);}
	\newcommand{\drawwhite}[1]{\foreach \p in {#1} \draw \p circle (0.1);}
	\begin{scope}
	\setcoords
	
	\draw[fill=gray!50]
	(a)--(d)--(e)--cycle
	(c)--(d)--(e)--cycle
	(b)--(d)--(e)--cycle;
	
	\draw (b) -- (c);
	
	\drawblack{(a),(b),(c),(d),(e)}
	
	\path (a) node[anchor=west] {$a$};
	\path (b) node[anchor=east] {$b$};
	\path (c) node[anchor=east] {$c$};
	\path (d) node[anchor=south] {$d$};
	\path (e) node[anchor=north] {$e$};
	\end{scope}
	
	\begin{scope}[xshift=-7cm]
		\begin{scope}[yshift=3cm]
			\setcoords
			
			\draw (c) -- (b) -- (e) -- cycle;
			\draw (a) -- (e);
			
			\drawblack{(a),(b),(c),(e)}
			\drawwhite{(d)}
		\end{scope}
		
		\begin{scope}[yshift=0cm]
			\setcoords
			
			\draw[fill=gray!50] 
			(c)--(d)--(e)--cycle
			(b)--(d)--(e)--cycle; 
			
			\draw (b) -- (c);
			
			\drawblack{(b),(c),(d),(e)}
			\drawwhite{(a)}
		\end{scope}
		
		\begin{scope}[yshift=-3cm]
			\setcoords
			
			\draw (b) -- (c) -- (d) -- cycle;
			\draw (a) -- (d);
			
			\drawblack{(a),(b),(c),(d)}
			\drawwhite{(e)}
		\end{scope}
		
		\begin{scope}[yshift=-6cm]
			\setcoords
			
			\draw (b) -- (c);
			
			\drawblack{(a),(b),(c)}
			\drawwhite{(d),(e)}
		\end{scope}
	\end{scope}
	
	\begin{scope}[xshift=-14cm,yshift=-1.5cm]
		\begin{scope}[yshift=3cm]
			\setcoords
			
			\draw (b) -- (c) -- (e) -- cycle;
			
			\drawblack{(b),(c),(e)}
			\drawwhite{(a),(d)}
		\end{scope}
		
		\begin{scope}[yshift=0cm]
			\setcoords
			
			\draw (b) -- (c) -- (d)-- cycle;
			
			\drawblack{(b),(c),(d)}
			\drawwhite{(a),(e)}
		\end{scope}
		
		\begin{scope}[yshift=-3cm]
			\setcoords
			
			\drawblack{(a),(b)}
			\drawwhite{(c),(d),(e)}
		\end{scope}
		
		\begin{scope}[yshift=-6cm]
			\setcoords
			
			\drawblack{(a),(c)}
			\drawwhite{(b),(d),(e)}
		\end{scope}
	\end{scope}
	
	\begin{scope}[every node/.style={circle,minimum size = 1.7cm}]
		\node at(0,0) (abcde) {};
		\node at (-7,3) (abce) {};
		\node at (-7,0) (bcde) {};
		\node at (-7,-3) (abcd) {};
		\node at (-7,-6) (abc) {};
		\node at (-14,1.5) (bce) {};
		\node at (-14,-1.5) (bcd) {};
		\node at (-14,-4.5) (ab) {};
		\node at (-14,-7.5) (ac) {};
		
		\draw[->] (abcde) -- (abce);
		\draw[->] (abcde) -- (bcde);
		\draw[->] (abcde) -- (abcd);
		\draw[->] (abce) -- (bce);
		\draw[->] (bcde) -- (bce);
		\draw[->] (bcde) -- (bcd);
		\draw[->] (abcd) -- (bcd);
		\draw[->] (abc) -- (ab);
		\draw[->] (abc) -- (ac);
	\end{scope}
	
	\end{tikzpicture}
	\caption{The induced subcomplexes of $\Delta$ from \Cref{ex:intro}. The arrows indicate non-zero linear coefficients.}\label{fig:exintro}
\end{figure}

\begin{example}\label{ex:intro}
	Let $\Delta$ be the simplical complex with vertex set $\set{a,b,c,d,e}$ and facets $\set{a,c,d}$, $\set{b,d,e}$, $\set{c,d,e}$ and $\set{b,c}$.
	Its Stanley-Reisner ideal is $I_\Delta = \<ab,ac,bcd,bce\>$.
	A minimal free resolution $\Fb$ is given by the following complex:
	{\tiny
		\newcommand{\B}[1]{\mathbf{#1}}
		\[
		0 \leftarrow S
		\xleftarrow{\bordermatrix{
				& ab & ac & bcd & bce \cr
				1 & ab & ac & bcd & bce
		}}
		S^4 \xleftarrow{\bordermatrix{
				& abc    & abcd  & abce  & bcde \cr
			 ab & \B{-c} & cd    & ce    &  0 \cr
			 ac & \B{ d} &  0    &     0 &  0 \cr
			bcd &  0     & \B{-a} & \B{0} & \B{ e} \cr
			bce &  0     & \B{0} & \B{-a} & \B{-d} 
		}}
		S^4 \xleftarrow{\bordermatrix{
				& abcde \cr
				abc  &  0 \cr
				abcd & \B{ e} \cr
				abce & \B{-d} \cr
				bcde & \B{ a}
		}} S \leftarrow 0
		\]
	}
	The linear entries are marked in boldface.
	We indicate the relevant induced subcomplexes of $\Delta$ in \Cref{fig:exintro}.
	There, the arrows indicate non-zero linear entries in the matrices of $\Fb$.
	They correspond to non-zero restriction maps in the zero- or one-dimensional cohomology.
\end{example}

As a special case of \Cref{thm:main}, we obtain a very simple and explicit description of the $1$-linear strand of $\Fb$ (this is the strand containing the \emph{quadratic} generators of $I_\Delta$).
In particular, we show that the maps in the $1$-linear strand can always be written using only $\pm1$ coefficients, see \Cref{prop:2linear}. 
This extends and simplifies the results of Horwitz \cite{Horwitz} and Chen \cite{Chen}, who constructed the minimal free resolution of $I_\Delta$ under the assumption that $I_\Delta$ is generated by quadrics and has a linear resolution.

This article is structured as follows.
In \Cref{sec:prelim} we set up notational conventions and recall various preliminaries.
In the subsequent \Cref{sec:SR} we prove our main result.
In the last section, we ask several open questions and pose a conjecture.

\subsection*{Acknowledgments}
The author thanks Vic Reiner and Srikanth Iyengar for inspiring discussions.

\section{Notation and preliminaries}\label{sec:prelim}
For $n \in \NN$ we write $[n] := \set{1,\dotsc,n}$.
To simplify the notation, we set $U\setminus u := U\setminus\set{u}$ and $U\cup u := U\cup\set{u}$ for $U \subseteq [n]$ and $u \in [n]$.

Throughout the paper let $\kk$ denote a fixed field and $S = \kk[x_1, \dotsc, x_n]$ be a polynomial ring over it.
Further, we write
\[ \mm := \< x_1, \dotsc, x_n\> \]
for the unique maximal graded ideal in $S$.
We only consider the fine $\ZZ^n$-grading on $S$.
Squarefree multidegrees are identified with subsets of $[n]$. 
In particular, for $U \subseteq [n]$, we write $S(-U)$ for the free cyclic $S$-module whose generator is in degree $U$.

\subsection{The linear part}\label{sec:ld}
Let $M$ be a finitely generated graded $S$-module. We consider its minimal free resolution
\[ \Fb: 0 \oot M \oot  \Fb[0] \stackrel{\del_1}{\oot} \dotsb \stackrel{\del_n}{\oot} \Fb[n]\oot 0. \]
There is a natural filtration on $\Fb$, which is given by
\[ \mathcal{F}^j(\Fb[i]) := \mm^{j-i}\Fb[i]. \]
The associated graded complex $\Fblin$ is called the \emph{linear part} of $\Fb$.
It was introduced in \cite{EFS}, but see also \cite[Chapter 5]{HSV}. 
Note that $\Fblin[i] \cong \Fb[i]$ as $S$-modules, but the differentials on the complexes are different.
Indeed, $\Fblin$ can be constructed alternatively by choosing a basis for $\Fb$, representing its differential in this basis by matrices, and deleting all non-linear entries, that is, entries in $\mm^2$.

\subsection{Simplicial chains and cochains}
Let $\Delta$ be a simplicial complex with vertex set $[n]$.
For the convenience of the reader, we recall the definitions of the chain and cochain complexes of $\Delta$.
For keeping track of the signs, we use the notation
\[ \alpha(A,B) := \#\set{(a,b) \in A \times B \with a > b} \]
for subsets $A,B \subseteq [n]$. We further set $\alpha(a,B) = \alpha(\set{a},B)$.
The \emph{(augmented oriented) chain complex} of $\Delta$ is the complex of $\kk$-vector spaces $\tC_\bullet(\Delta)$, 
where $\tC_d(\Delta)$ is the $\kk$-vector space spanned by the $d$-faces of $\Delta$, and the differential is given by 
\[ \partial(F) = \sum_{i \in F} (-1)^{\alpha(i, F)} F \setminus\sset{i}. \]
Here, we consider the empty set as the unique face of dimension $-1$.
Note that the definition of $\alpha(i,F)$ depends on the ordering of $[n]$.
The \emph{(augmented oriented) cochain complex} of $\Delta$ is the dual complex $\tC^\bullet(\Delta) := \hom_\kk(\tC_\bullet(\Delta), \kk)$. 
We write $F^* \in \tC^d(\Delta)$ for the basis element dual to a $d$-face $F \in \Delta$.
In this basis, the differential on $\tC^\bullet(\Delta)$ can be written as
\[ \partial(F^*) = \sum_{i \in [n]\setminus F} (-1)^{\alpha(i, F )} (F\cup\sset{i})^*. \]
Here, we adopt the convention that $(F\cup\sset{i})^* = 0$ if $F\cup\sset{i} \notin \Delta$.
The (reduced) \emph{simplicial cohomology} of $\Delta$ is $\coho{*}{\Delta} := \coho{*}{\Delta; \kk} := H^*(\tC^\bullet(\Delta))$.

For a subcomplex $\Gamma \subseteq \Delta$, there 
is
a restriction map $\tC^\bullet(\Delta) \to \tC^\bullet(\Gamma)$.
If $\omega \in \tC^\bullet(\Delta)$ is a cochain and $U \subseteq [n]$, then we write $\restr{U}{\omega}$ for the restriction of $\omega$ to $\Delta_U$.

\section{Proof of the main result}\label{sec:SR}
Let $\Delta$ be a simplicial complex with vertex set $[n]$.
Recall that the \emph{Stanley-Reisner ideal} of $\Delta$ is defined as $I_\Delta := \<\xb^U \with U \subseteq [n], U \notin \Delta \>$, where $\xb^U := \prod_{i \in U} x_i$.
Further, the \emph{Stanley-Reisner ring} is $\kk[\Delta] := S/I_\Delta$. 
Every squarefree monomial ideal arises as the Stanley-Reisner ideal of some simplicial complex, see \cite[Theorem 1.7]{MS}.

We are going to need an explicit version of Hochster's formula.
It is of course well-known, but we give the details for the convenience of the reader.
Let $V = \Span_\kk\set{e_1, \dotsc, e_n}$ be an $n$-dimensional $\kk$-vector space and let $\Lambda^\bullet V$ denote the exterior algebra over it.
For $F = \set{i_1, \dotsc, i_r} \subseteq [n]$ with $i_1 < \dotsb< i_r$, we set $\eb_F := e_{i_1} \wedge \dotsm \wedge e_{i_r}$.
Then $\kk[\Delta] \otimes_\kk \Lambda^\bullet V$ is the Koszul complex of $\kk[\Delta]$.
\begin{proposition}[\cite{Hoc77}]\label{prop:hochster}
	For each squarefree multidegree $U \subseteq [n]$, there is an isomorphism of complexes
	$(\kk[\Delta] \otimes_\kk \Lambda^\bullet V)_U \too \tC^{\#U -1 - \bullet}(\Delta_U)$,
	given by $\xb^ F \otimes \eb_{U\setminus F} \mapsto (-1)^{\alpha(F,U)} F^*$.
\end{proposition}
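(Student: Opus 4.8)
The plan is to verify the stated map directly: first as an isomorphism of graded vector spaces, and then as a chain map, the latter reducing to a single sign computation.

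First I would pin down the bases on both sides. The multidegree-$U$ strand $(\kk[\Delta] \otimes_\kk \Lambda^\bullet V)_U$ has $\kk$-basis $\set{\xb^F \otimes \eb_{U \setminus F} \with F \subseteq U,\ F \in \Delta}$, since $\xb^F \otimes \eb_{U\setminus F}$ has multidegree $U$ and is nonzero in $\kk[\Delta]$ exactly when $F$ is a face. Its exterior (homological) degree is $\#(U\setminus F) = \#U - \#F$. The target $\tC^{\#U-1-\bullet}(\Delta_U)$ has basis the duals $F^*$ of the faces $F$ of $\Delta_U$, which are precisely the faces $F \in \Delta$ with $F \subseteq U$, sitting in cohomological degree $\#F - 1$. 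Since $\#U - 1 - (\#U - \#F) = \#F-1$, the assignment $\xb^F \otimes \eb_{U\setminus F} \mapsto (-1)^{\alpha(F,U)} F^*$ matches the reindexed gradings and is a bijection of bases, hence a graded isomorphism. It therefore only remains to check that it intertwines the differentials.

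Next I would write out both differentials on a basis element and compare coefficients face by face. Writing $U \setminus F = \set{j_1 < \dotsb < j_s}$, the Koszul differential sends $\xb^F \otimes \eb_{U\setminus F}$ to $\sum_k (-1)^{k-1}\, \xb^{F\cup j_k} \otimes \eb_{U \setminus (F \cup j_k)}$, where the summand with $F \cup j_k \notin \Delta$ vanishes in $\kk[\Delta]$; under the map this becomes $\sum_k (-1)^{k-1}(-1)^{\alpha(F\cup j_k,\,U)}(F\cup j_k)^*$. On the other hand the simplicial cochain differential of $\Delta_U$ sends $(-1)^{\alpha(F,U)}F^*$ to $(-1)^{\alpha(F,U)}\sum_{i\in U\setminus F}(-1)^{\alpha(i,F)}(F\cup i)^*$, with $(F\cup i)^* = 0$ when $F\cup i\notin\Delta$. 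The two sums run over the same faces and the vanishing conventions agree, so matching them amounts to an equality of scalar coefficients for each $i = j_k$.

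The crux is this sign identity, and it is the step I expect to be the only real obstacle. Using the additivity $\alpha(F\cup i, U) = \alpha(F,U) + \alpha(i,U)$, the common factor $(-1)^{\alpha(F,U)}$ cancels and the claim reduces to $(-1)^{(k-1)+\alpha(i,U)} = (-1)^{\alpha(i,F)}$. Because $F$ and $U\setminus F$ partition $U$, one has $\alpha(i,U) = \alpha(i,F) + \#\set{b \in U\setminus F \with b<i}$, and since $i = j_k$ is the $k$-th smallest element of $U\setminus F$, this extra count is exactly $k-1$. Hence $(k-1)+\alpha(i,U)-\alpha(i,F) = 2(k-1)$ is even, the coefficients agree, and the map commutes with the differentials on the nose, with no global sign correction needed. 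The whole argument is elementary; the one subtlety is reconciling the positional sign $(-1)^{k-1}$ coming from the exterior algebra with the order-statistic signs $\alpha(\cdot,\cdot)$ of the cochain complex, which works precisely because the position $k$ of $i$ in $U\setminus F$ equals $1+\#\set{b\in U\setminus F\with b<i}$.
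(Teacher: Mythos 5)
Your proof is correct and follows essentially the same route as the paper: a direct check on basis elements that the map intertwines the Koszul and cochain differentials, reducing to the sign identity $\alpha(F\cup i,U)-\alpha(F,U)=\alpha(i,U)=\alpha(i,F)+\alpha(i,U\setminus F)$, which is exactly the computation in the paper (your $(-1)^{k-1}$ is the paper's $(-1)^{\alpha(i,U\setminus F)}$). The only difference is that you also spell out the bijection of bases and the degree bookkeeping, which the paper leaves implicit.
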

\begin{proof}
	It suffices to show that the following diagram commutes:
	\[
	\begin{tikzcd} 
	\xb^ F \otimes \eb_{U\setminus F} 
	\arrow{r}{} \arrow[]{d} &
	\displaystyle{\sum_{i \in U\setminus F}} (-1)^{\alpha(i, U\setminus F)} \xb^F x_i \otimes \eb_{U\setminus(F\cup\sset{i})}  
	\arrow[]{d}\\
	(-1)^{\alpha(F,U)} F^*
	\arrow{r}{} &
	(-1)^{\alpha(F,U)}\displaystyle{\sum_{i \in U\setminus F}} (-1)^{\alpha(i,  F)} ( F\cup\sset{i})^*
	\end{tikzcd}
	\]
	We only need to show that
	$\alpha(F,U)+\alpha(i, F)\equiv \alpha(i,U\setminus F)+\alpha( F\cup i,U)$ modulo $2$.
	This follows from the following computation:
	\[ \alpha(F\cup i,U) - \alpha(F,U)
	= \alpha(i, U) =
	\alpha(i, F) + \alpha(i,U\setminus F) \qedhere
	\]
\end{proof}

Now we turn to the proof or \Cref{thm:main}, which we restate for convenience.
\thmmain*

\begin{proof}
	\newcommand{\ELL}{\mathcal{L}}
	\newcommand{\ELLp}{\mathcal{L}'}
	We follow the arguments of the proof of \cite[Theorem 4.1]{Y00}.
	Following \cite{HSV} and \cite[p. 107--109]{EFS}, we consider the double complex $(\ELL_{\bullet,\bullet}, \partial, \partial')$, whose modules are given by $\ELL_{a,b} := \kk[\Delta] \otimes_\kk \Lambda^a V \otimes_\kk S_b$ and the differentials are
	\[\begin{aligned}
	\partial(s_1 \otimes \eb_F \otimes s_2) &:= 
	\sum_{i \in F} (-1)^{\alpha(i,F)} s_1 x_i \otimes \eb_{F\setminus\sset{i}} \otimes \phantom{x_i}s_2
	\\
	\partial'(s_1 \otimes \eb_F \otimes s_2) &:= 
	\sum_{i \in F} (-1)^{\alpha(i,F)} s_1 \phantom{x_i}\otimes \eb_{F\setminus\sset{i}} \otimes x_i s_2
	\end{aligned}\]
	\noindent It is not difficult to see that the homology of $(\ELL_{\bullet,\bullet}, \partial)$ is isomorphic to $\Tor^S_\bullet(\kk[\Delta],\kk) \otimes_\kk S$.
	By \cite[Theorem 5.1]{HSV}, the linear part of the minimal free resolution is induced by $\partial'$.
	
	Consider the sub-double complex $\ELLp_{a,b} := \bigoplus_{\sigma \in [n]} (\kk[\Delta] \otimes_\kk \Lambda^a V \otimes_\kk S_b)_\sigma$ of $\ELL_{\bullet,\bullet}$. 
	As $\Tor^S_\bullet(\kk[\Delta],\kk)$ is non-zero in squarefree degrees only \cite[Cor. 1.40]{MS}, both $\ELLp_{\bullet,\bullet}$ and $\ELL_{\bullet,\bullet}$ have the same homology with respect to $\partial$.
	
	By \Cref{prop:hochster}, $(\ELLp_{\bullet,\bullet}, \partial)$ is isomorphic to $\bigoplus_{U \subseteq [n]} \tC^{\#U-1-\bullet}(\Delta_U) \otimes_\kk S(-U)$,
	where $\partial'$ translates to the map 
	\[\begin{aligned} \tC^{j}(\Delta_U) \otimes_\kk S(-U) &\too \bigoplus_{u \in U} \tC^{j}(\Delta_{U\setminus\sset{u}}) \otimes_\kk S(-U\setminus\sset{u}) \\
	F^* \otimes s &\longmapsto \sum_{u \in U} (-1)^{\alpha(u, F)} \restr{U\setminus u}{F^*} \otimes x_u s 
	\end{aligned}\]
	Now the claim follows by taking homology with respect to $\partial$ and applying \cite[Theorem 5.1]{HSV}.
\end{proof}

A particularly simple case of \Cref{thm:main} is the following. See \Cref{conj:3strand} for a conjectural improvement of this result.
\begin{corollary}\label{prop:2linear}
	Let $I \subseteq S$ be a monomial ideal  and let $\Fb$ be its minimal free resolution.
	Then one can choose a basis of $\Fb$ such that the maps in its $2$-linear strand have only coefficients in $\set{-1,0,1}$.
\end{corollary}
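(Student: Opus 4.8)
The plan is to reduce to the squarefree case and then read off the $2$-linear strand directly from \Cref{thm:main}. First, given an arbitrary monomial ideal $I$, I would \emph{polarize} it to obtain a squarefree monomial ideal $I_\Delta$ in a larger polynomial ring. Polarization preserves all graded Betti numbers, and more importantly the minimal free resolution of $S/I$ is obtained from that of $\kk[\Delta]$ by specializing along a regular sequence of differences of variables; depolarization merely renames variables and hence does not alter the coefficients appearing in the differentials. In particular a degree-$2$ generator of $I$ corresponds to a squarefree quadratic generator of $I_\Delta$, and the $2$-linear strand is carried isomorphically onto that of $I_\Delta$. Thus it suffices to exhibit a $\set{-1,0,1}$-basis in the squarefree case.

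So assume $I = I_\Delta$. Passing from the resolution of the ideal to that of $\kk[\Delta]$ shifts homological degree by one, so the $2$-linear strand of $I_\Delta$ is precisely the $1$-linear strand of $\kk[\Delta]$. By \Cref{def:strand} this strand has, in homological degree $i$, the summands $\Fblin[i]_{i+1}$, that is, those $U$ with $\#U = i+1$; for such $U$ the relevant cohomology in \Cref{thm:main} is $\coho{\#U-i-1}{\Delta_U} = \coho{0}{\Delta_U}$. Hence the entire strand is the complex
\[ \cdots \too \bigoplus_{\#U = i+1} \coho{0}{\Delta_U}\otimes_\kk S(-U) \too \bigoplus_{\#U = i} \coho{0}{\Delta_U}\otimes_\kk S(-U) \too \cdots, \]
with differential the signed restriction maps of \Cref{thm:main}. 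The crucial simplification is that only reduced $0$-cohomology occurs, so every term is governed purely by the connected components of the induced subcomplexes $\Delta_U$.

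The next step is to produce a global basis. For each $U$ I would represent $\coho{0}{\Delta_U}$ by the component-indicator cochains $\chi_C = \sum_{v\in C} v^*$, one for each connected component $C$ of $\Delta_U$; these span the cocycles $Z^0$, and modulo the single coboundary relation $\sum_C \chi_C \equiv 0$ they furnish a basis of $\coho{0}{\Delta_U}$ after deleting one reference component. Under the restriction map $\restr{U\setminus u}{\cdot}$ a vertex dual $v^*$ goes to $v^*$ for $v \neq u$ while $u^* \mapsto 0$, so $\chi_C$ restricts to the cochain of $C\setminus u$, which in $\Delta_{U\setminus u}$ is exactly the sum of the indicators of the (possibly several) components into which $C$ breaks. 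Consequently each basis class maps to a sum of basis classes, giving a matrix with entries in $\set{0,1}$ before reduction; incorporating the reference-component relation contributes at worst a sign, and the prefactor $(-1)^{\alpha(u,U)}$ from \Cref{thm:main} is already $\pm1$, so every entry lands in $\set{-1,0,1}$.

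The main obstacle I anticipate is bookkeeping: I must choose the reference components and orient the basis classes consistently across all $U$ simultaneously, rather than one map at a time, and then verify that the combined effect of the restriction-induced $\set{0,1}$-matrices, the reference-reduction signs, and the Koszul signs $(-1)^{\alpha(u,U)}$ never produces a coefficient outside $\set{-1,0,1}$. Since all three ingredients are individually $\pm1$ or $0$ and the restriction of a component indicator is an honest coefficient-one sum of component indicators, no cancellation can inflate an entry, so the sign tracking is the only delicate point, and it is purely combinatorial.
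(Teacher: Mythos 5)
Your proposal is correct and follows essentially the same route as the paper: reduce to the squarefree case by polarization, identify the $2$-linear strand via \Cref{thm:main} with the restriction maps on $\coho{0}{\Delta_U}$, and take the basis of connected-component indicator cochains with one distinguished component deleted, checking that restriction plus the reference-component relation and the Koszul sign only ever yields coefficients in $\set{-1,0,1}$. The paper just makes your final sign-checking explicit as a five-case analysis of how a component of $\Delta_U$ behaves under deleting the vertex $u$.
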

\begin{proof}
	We may assume that $I$ is squarefree by replacing it with its polarization \cite[p. 44]{MS}. 
	So it is the Stanley-Reisner ideal of some simplicial complex $\Delta$.
	By \Cref{thm:main}, maps in the $2$-linear strand of its minimal free resolution are induced by the restriction maps $\coho{0}{\Delta_U} \to \coho{0}{\Delta_{U\setminus\sset{u}}}$ for each $u \in U$.
	
	For each subset $U \subseteq [n]$ we choose a distinguished connected component $C_{U,0}$ of $\Delta_U$.
	For each other connected component $C_{U,i}$ of it, let $e_{U,i}: U \to \kk$ the function which is $1$ on the vertices of $C_{U,i}$ and $0$ on the others. 
	It is clear that the set $\set{e_{U,i}: i > 0}$ forms a basis of $\coho{0}{\Delta_U}$.
	
	We claim that in this basis, the differential has coefficients $\pm1$.
	For $i > 0$ there are the following cases:
	\begin{enumerate}
		\item $C_{U,i} = C_{U\setminus u, j}$ for some $j > 0$,
		\item $C_{U,i} = C_{U\setminus u, 0}$,
		\item $C_{U,i}$ splits into several connected components $C_{U\setminus u, j_1}, \dotsc, C_{U\setminus u, j_r}$ of $\Delta_{U\setminus u}$ with $j_1,\dotsc, j_r > 0$,
		\item same as (3), with $j_1 = 0$,
		\item $C_{U,i}$ is the isolated vertex $u$.
	\end{enumerate}
	In each case, it is easy to see that $e_{U,i}$ is mapped to a linear combination of the  $e_{U\setminus u,j}$ with coefficients in $\set{-1,0,1}$.
\end{proof}

\section{Questions and open problems}

\subsection{Affine monoid algebras}
Recall that a (positive) affine monoid $Q \subseteq \NN^n$ is a finitely generated submonoid of $\NN^n$.
The monoid algebras $\kk[Q]$ of affine monoids form a well-studied class of algebras.
We refer the reader to \cite{MS} or \cite[Chapter 6]{BH} for more information on these rings.
Each positive affine monoid has a unique minimal generating set, which is called its \emph{Hilbert basis}.
It yields a set of generators for $\kk[Q]$ and thus a surjection $S \to \kk[Q]$ from a polynomial ring $S$.
Moreover, $\kk[Q]$ carries a natural $\NN^n$-multigrading.
There is a combinatorial interpretation of the multigraded Betti numbers of $\kk[Q]$, namely $\Tor_i^S(\kk[Q], \kk)_\ab \cong \ho{i}{\Delta_\ab}$ for a certain simplicial complex $\Delta_\ab$, see \cite[Theorem 9.2]{MS}.

\begin{question}
	Is there a topological interpretation of the linear part of the minimal free resolution of $\kk[Q]$ over $S$?
\end{question}
In this situation, a description along the lines of \Cref{thm:main} would require a map $\ho{i}{\Delta_\ab} \to \ho{i-1}{\Delta_{\ab-\bb}}$, where $\bb$ is an element of the Hilbert basis such that $\ab-\bb \in Q$. Here, $\Delta_{\ab-\bb}$ is a subcomplex of $\Delta_{\ab}$,  but in general it is neither a restriction nor a link.

\subsection{Approximations of resolutions}
Let $I_\Delta \subseteq S$ be the Stanley-Reisner ideal of some simplicial complex $\Delta$ and let $\Fb$ denote the minimal free resolution of $I_\Delta$.
Hochster's formula can be interpreted as giving a description of the complex $\Fb/\mm\Fb$ (with trivial differential).
Our \Cref{thm:main} extends this by (essentially) describing $\Fb/\mm^2\Fb$.
These results can be considered as successive approximations of $\Fb$, so the following question seems natural:
\begin{question}
	Is there a combinatorial or topological description of $\Fb/\mm^3\Fb$?
\end{question}
This seems to be substantially more difficult than describing $\Fb/\mm^2\Fb$.
One reason for this is the following.
Even though a minimal free resolution is unique up to isomorphism, if one wants to write it down explicitly one needs to choose an $S$-basis for $\Fb$.
This choice can be done in two steps. 
First choose a $\kk$-basis for $\Fb/\mm\Fb = \Tor_*(S/I_\Delta,\kk)$, and then choose a lifting of these elements to $\Fb$ (any such lifting works due to Nakayama's lemma).
Hochster's formula is a convenient tool for the first choice.
\Cref{thm:main} implies that the differential of $\Fb/\mm^2\Fb$ does not depend on the second choice, but this is no longer true for $\Fb/\mm^3\Fb$.

\subsection{Coefficients in resolutions}
Let $I \subseteq S$ be a monomial ideal containing no variables, and let $\Fb$ be its with minimal free resolution.
We saw in \Cref{prop:2linear} that the differential in the $2$-linear strand of $\Fb$ can be written using only coefficients $\pm 1$. 
On the other hand, in \cite[Section 5]{RW} Reiner and Welker gave an example where the differential on the $4$-linear strand cannot be written using only coefficients $\pm 1$.
We believe that their example is optimal in that sense, and hence offer the following conjecture.

\begin{conjecture}\label{conj:3strand}
	Let $I \subseteq S$ be a monomial ideal.
	Then it is possible to choose a basis for its minimal free resolution $\Fb$, such that the differential on the $3$-linear strand can be written using only coefficients $\pm 1$.
\end{conjecture}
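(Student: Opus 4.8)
The plan is to push the method of \Cref{prop:2linear} one strand further, while separately controlling the single non-linear component of the differential that can occur in $\Fb^{\leq 3}$. First I would reduce to the squarefree case by polarization, exactly as in \Cref{prop:2linear}, and then, after splitting off any variable generators, assume $I = I_\Delta$ is generated in degrees $\geq 2$ (the presence of variables produces one extra cubic term, handled by the same transfer argument below). Under this reduction only the strands $2$ and $3$ contribute to $\Fb^{\leq 3}$: by \Cref{thm:main} the strand-$k$ part of $\Fblin$ in homological degree $i$ is $\bigoplus_{\#U = i+k} \coho{k-1}{\Delta_U} \otimes_\kk S(-U)$, so strand $2$ involves only $\coho{0}{\Delta_U}$ and strand $3$ only $\coho{1}{\Delta_U}$. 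Writing the differential of $\Fb^{\leq 3}$ in a homogeneous basis, it therefore decomposes into three pieces: the linear part inside strand $2$, the linear part inside strand $3$, and a single \emph{quadratic} component, carried by degree-two monomials $x_ax_b$, that lowers the strand from $3$ to $2$ and thus has the shape $\coho{1}{\Delta_U} \too \coho{0}{\Delta_{U\setminus\set{a,b}}}$. There is no quadratic-squared (strand $3$ to strand $1$) component, because $2$ is the lowest strand present.

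The two linear parts I would handle by extending \Cref{prop:2linear}. The strand-$2$ part is already covered by that corollary via the indicator basis of $\coho{0}{\Delta_U}$. For strand $3$ I would fix, for each $U$, a basis of $\coho{1}{\Delta_U}$ adapted to $1$-cocycles (equivalently, working on the dual side as in \Cref{prop:complin}, a basis of $\ho{1}{\Delta_U}$ given by fundamental and complete $1$-cycles, all of which have $\pm1$ coefficients). Because a restriction $\coho{1}{\Delta_U} \to \coho{1}{\Delta_{U\setminus u}}$ merely forgets the values of a cochain on the edges through $u$, and because $1$-(co)cycles meet every edge with coefficient $\pm1$, a case analysis parallel to the five cases of \Cref{prop:2linear}---now recording how a fundamental cycle is affected when a single vertex is deleted---should show that these restriction maps again have coefficients in $\set{-1,0,1}$. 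The essential point, which fails one dimension higher and is exactly what the Reiner--Welker strand-$4$ example exploits, is the rigidity of $1$-dimensional (co)homology.

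The quadratic component is the main obstacle, since it is not described by \Cref{thm:main} and, as noted in the discussion of $\Fb/\mm^3\Fb$ above, genuinely depends on the chosen liftings. To obtain an explicit formula I would return to the double complex $\kk[\Delta] \otimes_\kk \Lambda^\bullet V \otimes_\kk S$ from the proof of \Cref{thm:main}, with its differentials $\partial$ and $\partial'$, and apply the homological perturbation (transfer) formula: the differential of $\Fb$ equals $\partial'$ plus the higher terms $\pm\,\partial' h \partial',\ \pm\,\partial' h\partial' h\partial',\ \dotsc$, where $h$ is a contracting homotopy for $\partial$ onto its homology $\Tor^S_\bullet(\kk[\Delta],\kk)\otimes_\kk S$. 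In $\Fb^{\leq 3}$ only $\partial'$ and the single correction $\pm\,\partial' h \partial'$ survive, and $\partial' h \partial'$ realizes precisely the connecting map $\coho{1}{\Delta_U}\to\coho{0}{\Delta_{U\setminus\set{a,b}}}$ obtained by deleting the two vertices $a,b$ in turn. The crux is then to choose the homotopy $h$ (equivalently, the liftings) compatibly with the bases fixed in the previous step so that this composite comes out with coefficients in $\set{-1,0,1}$. Here both the source $\coho{1}{\Delta_U}$ and the target $\coho{0}{\Delta_{U\setminus\set{a,b}}}$ are of low dimension, so $\partial' h \partial'$ should reduce to tracking how a representing $1$-cycle meets the connected components created when $a$ and $b$ are removed; each such incidence contributes $\pm1$, with no room for a coefficient of absolute value larger than $1$ to accumulate. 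Making this estimate precise---and, more delicately, verifying that a single coherent choice of $h$ works simultaneously for all $U$ and all pairs $\set{a,b}$---is where I expect the real difficulty to lie, and it is exactly this step that must break down for $\coho{2}{\cdot}$, in agreement with the strand-$4$ counterexample.
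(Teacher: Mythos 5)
You are attempting to prove \Cref{conj:3strand}, which is stated in the paper as an open conjecture: the paper contains no proof of it, only the remarks that the maps $\Fb[1]\to\Fb[0]$ and $\Fb[2]\to\Fb[1]$ can always be given $\set{-1,0,1}$ coefficients, and that the Reiner--Welker example obstructs the analogous statement for $\Fb^{\leq 4}$. Your proposal is a reasonable research plan, and its skeleton is sound: after polarization, and for $I$ generated in degrees $\geq 2$, the differential on $\Fb^{\leq 3}$ does decompose into the two linear pieces described by \Cref{thm:main} (restrictions on $\coho{0}$ and on $\coho{1}$) plus one quadratic component from strand $3$ to strand $2$, and a perturbation/transfer description of that quadratic piece is the right framework. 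But the proposal does not constitute a proof, and you concede as much at the decisive point: choosing the homotopy $h$ (equivalently, the liftings) coherently for all $U$ and all pairs so that $\partial' h \partial'$ has entries in $\set{-1,0,1}$, \emph{compatibly} with the bases already fixed for the two linear pieces, is not a technical loose end --- it is the entire content of the conjecture. The paper's discussion of $\Fb/\mm^3\Fb$ makes exactly this point: unlike the linear part, the quadratic entries genuinely depend on the choice of lifting, and no mechanism is known that controls this dependence. Your heuristic that ``each incidence contributes $\pm1$, with no room for a larger coefficient to accumulate'' is an expectation, not an argument; the $\pm 1$ contributions from a representing $1$-cycle meeting several reassembling components can a priori sum.

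There is a second genuine gap, in the step you present as routine. The analogy with \Cref{prop:2linear} breaks down for the strand-$3$ linear part: $\coho{0}{\Delta_U}$ has a \emph{canonical} basis of component indicator functions, which is why all restriction maps simultaneously become unimodular, whereas $\coho{1}{\Delta_U}$ has no canonical basis. Your assertion that ``$1$-(co)cycles meet every edge with coefficient $\pm1$'' is false as stated: a $1$-cocycle is an arbitrary edge function satisfying the cocycle condition, and since $\Delta_U$ may contain $2$-faces, $\coho{1}{\Delta_U}$ is a subquotient of the edge space, so a basis of the quotient need not be representable by circuits at all (over fields of positive characteristic, complexes such as triangulations of $\mathbb{RP}^2$ already show that $\coho{1}$ can behave torsion-like). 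What you need is, for every $U$, a \emph{single} basis of $\coho{1}{\Delta_U}$ making all $\#U$ restriction maps $\coho{1}{\Delta_U}\to\coho{1}{\Delta_{U\setminus u}}$ unimodular at once; your per-map ``case analysis'' does not address this simultaneity, and no construction is given. The rigidity of one-dimensional (co)homology that you invoke (plausibly related to unimodularity of graphic cycle spaces, and plausibly the reason the conjecture stops one strand below the Reiner--Welker example) is precisely what would have to be proved. A minor further issue: splitting off variable generators is not a direct-sum reduction --- a variable contributes a Koszul tensor factor that couples the strands --- though this is repairable with care. In summary: correct framing, but both the strand-$3$ linear part and the quadratic component remain unproven, so the conjecture stays open.
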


Note that the first map in $\Fb$, $\del:\Fb[1] \to\Fb[0]$, can always be written using coefficients from $\set{-1,0,1}$. This is easily seen by considering the Taylor resolution.
Further, it is not difficult to explicitly give a basis for $\Fb[2]$ such that the differential $\del:\Fb[2] \to\Fb[1]$ has coefficients $\pm1$.

\printbibliography
\end{document}